\definecolor{svlinks}{rgb}{.0,0.3,0.6}
\newtheorem{observation}[subsection]{Observation}
\newtheorem*{observation*}{Observation}
\newtheorem{proposition}[subsection]{Proposition}
\newtheorem*{proposition*}{Proposition}
\newtheorem{theorem}[subsection]{Theorem}
\newtheorem*{theorem*}{Theorem}
\theoremstyle{definition}
\newtheorem{definition}[subsection]{Definition}
\newtheorem*{definition*}{Definition}
\newtheorem*{claim*}{Claim}
\newtheorem*{question*}{Question}
\newtheorem{note}[subsection]{Note}
\newtheorem*{note*}{Note}
\newtheorem{fact}[subsection]{Fact}
\newtheorem*{fact*}{Fact}
\begin{document}
\makeatletter\def\shfiuwefootnote{\gdef\@thefnmark{}\@footnotetext}\makeatother\shfiuwefootnote{Version 2022-10-23\_3. See \url{https://shelah.logic.at/papers/1123/} for possible updates.}

\title{Ramsey Partitions of Metric Spaces}

\author{Saharon Shelah}
\address{The Hebrew University of Jerusalem and Rutgers University.}
\thanks{The first author was partially supported by European Research Council grant 338821. The paper has been edited using typing services generously funded by an individual who wishes to remain anonymous. Paper 1123 on author's list.}
\author{Jonathan Verner}
\address{Department of Logic, Faculty of Arts, Charles University, Prague}
\thanks{The second author was supported by FWF-GA\v{C}R LA Grant no.\ 17-33849L: Filters, ultrafilters and connections with
forcing. }

\begin{abstract}
We investigate the existence of metric spaces which, for any coloring with a fixed number of colors, contain monochromatic isomorphic copies of a fixed starting space K. In the main theorem we construct such a space of size \(2^{\aleph_0}\) for colorings with \(\aleph_0\) colors and any metric space \(K\) of size \(\aleph_0\). We also give a slightly weaker theorem for countable ultrametric \(K\) where, however, the resulting space has size~\(\aleph_1\).
\end{abstract}

\maketitle

\section{Introduction}

Recall that the standard Hungarian arrow notation
 \[
\kappa\rightarrow(\lambda)^\nu_\mu
\]
says that whenever we color \(\nu\)-sized subsets of \(\kappa\) with \(\mu\)-many
colors there is a homogeneous subset of \(\kappa\) of size \(\lambda\). 
The question whether, for a given \(\lambda,\nu,\mu\), there is a \(\kappa\)
such that the arrow holds has been well studied in Ramsey theory. If \(\nu=1\) 
the coloring becomes a partition of \(\kappa\) and the question reduces to a 
simple cardinality argument. However, if we add additional structure into the mix,
the question becomes nontrivial. The following definition makes precise what we 
mean by ``adding additional structure'':

\begin{definition*}
Let  \(\mathcal K\) be a class of structures and \(\kappa,\lambda,\mu\)
be cardinals. The arrow
\[
\kappa\rightarrow_{\mathcal K}(\lambda)^1_\mu,
\]
is shorthand for the statement that for every \(K\in\mathcal K\) of size 
\(\lambda\) there is a \(Y\in\mathcal K\) of size \(\kappa\) such that 
for any partition of \(Y\) into \(\mu\)-many pieces one of the pieces contains
an isomorphic copy of \(K\).
\end{definition*}

Note that for a class of structures there are often several natural notions
of  \emph{contains an isomorphic copy}. So the above notation assumes that the choice
of \(\mathcal K\) includes choosing the notion of \emph{contains an isomorphic copy}. 
The basic question, given a class \(\mathcal K\), then becomes whether for every 
\(\lambda,\mu\) there is a \(\kappa\) such that 
\(\kappa\rightarrow_{\mathcal K}(\lambda)^1_\mu\).

These types of questions have been considered before. For example A. Hajnal
and P. Komj\'{a}th consider (\cite{Hajnal:1988}, see also \cite{Sh:289}) 
the class \(\mathcal G\) of 
well-ordered undirected graphs. The notion of ``G contains an isomorphic copy of H'' 
is ``G contains an induced subgraph graph-isomorphic to H via an order-preserving
bijection''. For this class they prove

\begin{theorem*}[Hajnal, Komj\'{a}th]
\[
2^\kappa\rightarrow_{\mathcal G}(\kappa)^1_\kappa.
\]
\end{theorem*}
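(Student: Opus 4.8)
The plan is to fix a well-ordered graph $H$ of size $\kappa$, realise it on an ordinal $\delta$ with $|\delta|=\kappa$, code it by the sets $p_\beta=\{\alpha<\beta:\alpha\mathrel{E_H}\beta\}$ for $\beta<\delta$, and construct a single well-ordered graph $Y$ with $|V(Y)|=2^\kappa$ such that for every $c\colon V(Y)\to\kappa$ some colour class contains an induced subgraph isomorphic to $H$ by an order-preserving bijection. The one elementary input used throughout is König's theorem in the form $\operatorname{cf}(2^\kappa)>\kappa$: a set of size $2^\kappa$ split into $\kappa$ pieces still has a piece of size $2^\kappa$, and a union of $\kappa$-many sets each of size $<2^\kappa$ has size $<2^\kappa$.

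For the skeleton of $Y$ I would take a tree-like family of short sequences: $V(Y)$ a suitable subfamily of $(2^\kappa)^{<\delta}$, well-ordered so that a node of lower level precedes one of higher level (ordering each level arbitrarily), and declare a node $\sigma$ at level $\alpha$ adjacent to a node $\tau$ at level $\beta>\alpha$ extending it precisely when $\alpha\in p_\beta$ (incomparable nodes non-adjacent, or joined by any symmetric rule that cannot interfere). Then every branch of $Y$ of length $\delta$ is, verbatim, an induced order-copy of $H$, and $V(Y)$, being a union of at most $\kappa$ levels each of size at most $(2^\kappa)^\kappa=2^\kappa$, has size $2^\kappa$. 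So the problem reduces to a partition property of this tree: every $\kappa$-colouring of $V(Y)$ has a branch of length $\delta$ all of whose nodes get one colour.

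The point at which the construction must be done carefully — and the step I expect to be the main obstacle — is exactly here. The plain tree $(2^\kappa)^{<\delta}$ does not have this property: colour each node by the residue of its level modulo $\kappa$ and no monochromatic branch of length $\delta$ survives; likewise, simply passing to "the big colour class" fails, since an adversary can make every colour class $H$-free by diagonalising against the "roles" of the vertices of $H$. Thus $Y$ must be thinned and, more importantly, its induced copies of $H$ must be made to overlap so heavily that no vertex has a fixed role: each vertex of $Y$ should be usable as the image of each vertex of $H$ over an arbitrarily prescribed "history", and these data should be realisable $2^\kappa$-many times cofinally in $<_Y$. Arranging this "role-homogeneity plus cofinal saturation" on $2^\kappa$ vertices — including a thickening of the limit levels so that passing to a union of partial copies cannot spoil a colour — is the heart of the matter.

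Granting such a $Y$, I would extract the monochromatic copy by a recursion of length $\delta$. Given $c$, a preliminary fusion of length $\le\kappa$ over the $\kappa$ colours, using $\operatorname{cf}(2^\kappa)>\kappa$ at each step, isolates one colour $i$ that is hereditarily $2^\kappa$-popular: $i$ occurs $2^\kappa$-many times among the legal continuations of every partial copy the recursion can reach. Then one builds $\langle y_\beta:\beta<\delta\rangle$ recursively, keeping at stage $\beta$ a partial copy all in colour $i$ (so it induces $H\restriction\beta$ in the right order) together with a reservoir of $2^\kappa$-many legal colour-$i$ continuations; at a successor stage take $y_\beta$ inside the reservoir in colour $i$ still carrying its own fat reservoir (possible because $i$ is hereditarily popular), and at a limit stage take the thickened union, which again sits in a colour-$i$ reservoir. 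After $\delta$ steps $\langle y_\beta:\beta<\delta\rangle$ is an induced order-copy of $H$ contained in $c^{-1}(i)$. The size count $|V(Y)|=2^\kappa$ and the check that the constructed sequence induces $H$ are then routine, and all the arithmetic with $\operatorname{cf}(2^\kappa)>\kappa$ is bookkeeping.
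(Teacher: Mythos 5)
First, a point of comparison: the paper does not prove this theorem at all --- it is quoted from Hajnal and Komj\'{a}th with a citation --- so there is no in-paper argument to measure you against, and your proposal has to stand on its own. It does not: the object you need is never constructed. You correctly diagnose that the naive tree $(2^\kappa)^{<\delta}$ with level-keyed adjacency fails, you state that the proof requires a $Y$ with ``role-homogeneity plus cofinal saturation'' and a ``thickening of the limit levels'', you call this ``the heart of the matter'' --- and then you write ``Granting such a $Y$'' and move on. A list of desiderata for $Y$ is not a construction of $Y$, and everything downstream (the reservoirs, the recursion) depends on properties you have only postulated.

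Two of the postulated properties are genuinely problematic, not bookkeeping. (i) Because your edge relation is keyed to absolute levels ($\sigma$ at level $\alpha$ is joined to $\tau\supseteq\sigma$ at level $\beta$ iff $\alpha\in p_\beta$), an induced order-copy of $H$ must occupy levels $0,1,\dots,\beta,\dots$ exactly; hence at a limit level the next vertex of the copy is forced to be the union of the chain built so far, and an adversarial colouring need not give that single node colour $i$. Any ``thickening'' that restores freedom at limit levels changes which pairs of levels carry which edges, so it is not a patch but a redesign. (ii) The ``hereditarily $2^\kappa$-popular colour'' cannot be isolated by a fusion of length $\kappa$: the family of partial copies the recursion might reach has size up to $2^\kappa$, so you cannot diagonalise over it in $\kappa$ steps using only $\operatorname{cf}(2^\kappa)>\kappa$; securing one colour that survives along every branch of the recursion is precisely where a real combinatorial lemma is needed. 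A cleaner route, parallel to the paper's own Section 2 argument for metric spaces, is to take a $\kappa^+$-saturated (existentially closed) well-ordered graph $Y$ of size $2^\kappa$ and show that if every one of the $\kappa$ colour classes omitted an order-copy of $H$, then the failure of each class would be witnessed by a partial copy over which that class contributes no correct extension; amalgamating these $\kappa$ witnesses yields a consistent $1$-type over a set of size $\le\kappa$ that no vertex of $Y$ realizes, contradicting saturation. That shifts all the work into one standard model-theoretic existence statement instead of a bespoke tree.
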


J. Ne\v{s}et\v{r}il and V. R\"{o}dl consider (\cite{Nesetril:1977}) the classes \(\mathcal T_0\) and \(\mathcal T_1\)
of all \(T_0\) and \(T_1\) topological spaces with homeomorphic embeddings.
They prove

\begin{theorem*}[Ne\v{s}et\v{r}il and V. R\"{o}dl]
If  \(\mathcal T = \mathcal T_0\) or \(\mathcal T = \mathcal T_1\) then
\[
\kappa^\gamma\rightarrow_{\mathcal T}(\kappa)^1_\gamma
\]
\end{theorem*}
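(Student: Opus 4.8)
The plan is to take $Y$ to be the product of $\gamma$ copies of $K$, i.e.\ $Y = K^\gamma = {}^\gamma K$ with the product topology, and to extract a monochromatic copy of $K$ from any $\gamma$-colouring by a transfinite ``fusion'' argument along the $\gamma$ coordinates. The two cheap points come first: $|Y| = \kappa^\gamma$, and a product of $T_0$ (resp.\ $T_1$) spaces is again $T_0$ (resp.\ $T_1$), because two distinct points of $Y$ differ in some coordinate $i$ and the corresponding separation in the $i$-th factor pulls back along $\pi_i$. The copies of $K$ inside $Y$ that I intend to use are the coordinate slices: for $i<\gamma$ and $g\in K^{\gamma\setminus\{i\}}$ the ``$i$-axis through $g$'', $S_{i,g} = \{f\in Y : f\restriction(\gamma\setminus\{i\}) = g\}$, is carried homeomorphically onto $K$ by $\pi_i$; more generally, for a subspace $Z\subseteq K$ and a map $h\colon Z\to K^{\gamma\setminus\{i\}}$ whose graph is closed, the associated ``twisted slice'' over $Z$ is homeomorphic to $Z$. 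The twisted slices will be needed because a single colour class need not contain any straight slice — already for $K$ a two-point space a ``parity'' colouring of $K^2$ avoids all straight slices, while still (trivially) containing a two-point discrete subspace.

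For the colouring argument, fix $c\colon Y\to\gamma$. The engine is König's theorem in the form $\operatorname{cf}(\kappa^\gamma)>\gamma$: any partition of a set of size $\kappa^\gamma$ into $\gamma$ pieces leaves a piece of size $\kappa^\gamma$ (and likewise with $\kappa$ in place of $\kappa^\gamma$ when $\operatorname{cf}(\kappa)>\gamma$). I would recursively shrink a ``live box'': at stage $\alpha<\gamma$ maintain a set $C_\alpha\subseteq\gamma$ of already-pinned coordinates and a choice $g_\alpha\in K^{C_\alpha}$ so that the sub-box $B_\alpha = \{f : f\restriction C_\alpha = g_\alpha\}$ (homeomorphic to $K^{\gamma\setminus C_\alpha}$) is still ``large'' and $c\restriction B_\alpha$ is progressively more controlled; each step pins one new coordinate, chosen so that a fixed colour $j$ stays ``dense'' in what remains. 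After $\gamma$ steps the undecided coordinates should carry a copy of $K$ on which $c$ is constant, where — to absorb the parity-type obstructions — one allows that copy to be a twisted slice sitting inside the $j$-th colour class, the twist being read off the recursion.

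The hard part is exactly this bookkeeping, and I expect the honest proof to need one of two refinements. First, it is not clear that the straight power $K^\gamma$ always suffices; if it does not, the fallback is to build $Y$ as a tree-indexed amalgam of copies of $K$ — underlying set ${}^\gamma\kappa$, the $\kappa$-branching tree of height $\gamma$, topologised so that the ``fan'' at each node is a genuine homeomorphic copy of $K$ and so that the separation axiom is preserved — and run the same fusion down the levels of the tree; reconciling the separation axiom with a non-refined copy of $K$ at every fan is the delicate technical point of such a construction. Second, for infinite $\gamma$ there is no ``first coordinate'' at which to begin a naive recursion, so everything must be organised as a bona fide transfinite fusion sequence converging after $\gamma$ steps, with the balance ``$\gamma$ colours against $\gamma$ coordinates'' made exact. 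This last step is the one I expect to be the main obstacle: it is in effect a transfinite, topological analogue of the Hales–Jewett phenomenon, with König's theorem supplying the room needed at limit stages.
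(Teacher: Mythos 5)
First, a point of orientation: the paper does not prove this theorem at all --- it is quoted from Ne\v{s}et\v{r}il--R\"{o}dl \cite{Nesetril:1977} as background, so there is no in-paper argument to measure your attempt against. Judged on its own terms, your proposal is a research plan rather than a proof, and the plan's central step is missing. Your parity example correctly shows that no straight slice of $K^\gamma$ need be monochromatic, so the entire burden falls on the ``twisted slices'' and the transfinite fusion --- and neither is actually constructed. Concretely: (i) you never say how the colour $j$ that is to stay ``dense'' is selected; with $\gamma$ colours played against $\gamma$ coordinates there is no pigeonhole slack to stabilise a single colour stage by stage, and this is exactly the point where the argument has to do real work. (ii) ``Large'' and ``dense'' are never defined for the boxes $B_\alpha$, so there is no induction hypothesis to carry through a limit stage; K\"{o}nig's theorem gives $\operatorname{cf}(\kappa^\gamma)>\gamma$, which controls the size of one piece of one partition, not the survival of a distinguished colour through a decreasing $\gamma$-sequence of sub-boxes whose intersection is a single slice. (iii) The twisted slice $\{(z,h(z)):z\in Z\}$ is a homeomorphic copy of $K$ only under conditions on $h$ (the graph map must be an embedding, not merely a continuous bijection onto its image) that you never state, let alone arrange for the $h$ that is ``read off the recursion''.

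You flag most of this yourself (``the hard part is exactly this bookkeeping''), and you even hedge on whether $K^\gamma$ is the right space, offering an undeveloped tree-indexed amalgam as a fallback. So the status is: correct cardinality for the target space, correct identification of the obstruction to the naive slice argument, but the Ramsey-theoretic engine that would overcome that obstruction is absent rather than merely sketched. Note also that the restriction to $\mathcal T_0$ and $\mathcal T_1$ is surely not decorative --- the known argument must exploit how weak these separation axioms are when building the embedded copies, whereas nothing in your outline uses the separation axiom except to check that the product still satisfies it. To turn this into a proof you would need either to carry out the fusion in full (specifying the density notion, the colour-selection, and the limit-stage argument) or to follow the actual construction in \cite{Nesetril:1977}.
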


In this paper we will be interested mainly in these questions for metric spaces.
There have been some results for metric spaces (see e.g.  \cite{Komjath:1987},
\cite{Nesetril:2006}, \cite{Komjath:1987b}, \cite{Weiss:1990}). 
Most notably, W. Weiss shows in \cite{Weiss:1990} that there is a limit to what 
one can prove:

\begin{theorem*}[Weiss]
Assume that there are no inner models with measurable cardinals. If  \(X\) is a 
topological space then there is a coloring of \(X\) by two colours such that
\(X\) doesn't contain a monochromatic homeomorphic copy of the Cantor set.
\end{theorem*}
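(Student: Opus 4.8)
Call a topological space \emph{bad} if no colouring of it by two colours avoids a monochromatic homeomorphic copy of the Cantor set; under the stated hypothesis we must show no space is bad. Fix a space $X$ and call a subspace homeomorphic to $2^{\omega}$ a \emph{copy}; each copy has exactly $2^{\aleph_0}$ points. First a Bernstein-type reduction: suppose there is a family $\mathcal{C}$ of copies with $|\mathcal{C}|\le 2^{\aleph_0}$ such that every copy in $X$ contains a member of $\mathcal{C}$ as a subspace. Enumerate $\mathcal{C}=\{C_\alpha:\alpha<2^{\aleph_0}\}$ and recursively build disjoint sets $A,B\subseteq X$: at stage $\alpha$, only $|\alpha|<2^{\aleph_0}=|C_\alpha|$ points have been committed, so pick two fresh points of $C_\alpha$ and put one into each of $A,B$. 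Colouring $X$ by membership in $A$ then splits every $C_\alpha$, hence every copy, so $X$ is not bad. It therefore remains to treat spaces whose copies of $2^{\omega}$ are ``irreducibly wide'': no family of $\le 2^{\aleph_0}$ copies is cofinal under reverse inclusion.

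For such $X$ one brings in the core model. Code the relevant topology arithmetically: each copy $C$ is recoverable from a countable family of open subsets of $X$ forming a base of $C$, so the copies are indexed by the branches of a tree $\mathbb{T}$ of clopen approximations (of height $\omega$, but with very wide levels), each branch carrying its copy in $X$, and ``irreducibly wide'' says this assignment has no small cofinal subfamily. Let $\kappa$ be the number of branches. By the hypothesis that there is no inner model with a measurable cardinal, the Dodd--Jensen core model $K$ exists and the covering lemma holds in $V$; consequently the combinatorics that covering delivers are available --- $\square_{\lambda}$ holds for every infinite cardinal $\lambda$, and every uncountable set of ordinals lies inside a same-sized set belonging to $K$. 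Using covering to capture the branch structure of $\mathbb{T}$ inside a well-orderable $K$-structure of size $\kappa$, and using a $\square_{\kappa}$-sequence (together with $\mathrm{GCH}$ and condensation inside $K$) as a guessing device, one runs a \emph{coherent} Bernstein recursion of length $\kappa^{+}$ along $\mathbb{T}$: the guessing lets every branch be addressed cofinally often while keeping the set of committed points of each copy of size $<2^{\aleph_0}$, which is exactly what a naive recursion cannot ensure once there are more copies than reals. The resulting partition of $X$ splits every copy, so $X$ is again not bad.

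The difficulty is concentrated in this second step, at the topology--set-theory interface. In a general space the copies of $2^{\omega}$ can be embedded wildly: already $2^{\omega_1}$ contains a copy of $2^{\omega}$ having no sub-copy of countable support, so one cannot merely enumerate a countable base and reduce, and controlling this spreading-out is precisely why the covering lemma is invoked. The real work is to check that the colouring produced from the $\square$-guided recursion, transported back along $\mathbb{T}$, actually meets both colours on \emph{every} copy in $X$, including exotic copies not presented in the anticipated clopen-approximation form; by contrast the Bernstein reduction and the appeal to the Dodd--Jensen covering theorem are routine. (Under the stronger hypothesis that $0^{\sharp}$ does not exist one could instead argue inside $L$ using its global square principles directly; the weaker hypothesis in the statement is needed only to make enough of this combinatorics available through covering for the core model.)
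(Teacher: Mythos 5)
This statement is not proved in the paper at all: it is Weiss's theorem, quoted from \cite{Weiss:1990} purely as motivation for why the authors restrict attention to $\kappa=\omega$, so there is no in-paper proof to compare against. Judged on its own, your proposal has a genuine gap. The first paragraph (the Bernstein reduction under the hypothesis that some family of at most $2^{\aleph_0}$ copies is cofinal under reverse inclusion) is fine, but it is the trivial part. The entire content of the theorem lies in the remaining case, and there you do not give an argument: ``run a coherent Bernstein recursion of length $\kappa^+$ guided by a $\square_\kappa$-sequence so that every branch is addressed cofinally often while each copy has fewer than $2^{\aleph_0}$ committed points'' names the desired outcome without exhibiting the mechanism that produces it. Since a copy of the Cantor set has only $2^{\aleph_0}$ points, any recursion that touches more than $2^{\aleph_0}$ copies must reuse points, and explaining why the reused commitments are consistent and still split every copy is precisely the theorem; you flag this yourself as ``the real work'' and then leave it unverified. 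Several supporting assertions are also shaky: covering over the Dodd--Jensen core model yields $\lambda^{+K}=\lambda^+$ and hence square-like principles at \emph{singular} $\lambda$, not global $\square$ in $V$ outright; and the claim that $2^{\omega_1}$ contains a copy of $2^\omega$ with no sub-copy of countable support is false as stated, since $2^\omega$ has only countably many clopen sets, so any homeomorphic copy inside $2^{\omega_1}$ is already separated (hence faithfully projected) by countably many coordinates.

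For the record, Weiss's actual argument is organized quite differently: it proceeds by induction on $|X|$, writing $X$ as an increasing continuous union $\bigcup_{\alpha<\lambda}X_\alpha$ of smaller pieces and proving a purely topological lemma to the effect that every copy of the Cantor set contains a sub-copy that is either captured inside one piece of the filtration or spread so thinly across the pieces that it can be killed by a transversal-style coloring; the anti-large-cardinal hypothesis enters through the covering lemma only to supply the combinatorics needed at the singular steps of this induction. That topological decomposition lemma, which is where the difficulty actually lives, is absent from your sketch, so the proposal cannot be repaired by merely filling in details of the core-model machinery.
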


Also see \cite[3.8(1), 3.9(3)]{Sh:460}: it says that if $2^{\aleph_0} > \aleph_\omega$ and some very weak statement holds (the precise formulation is unimportant here, but it is weak enough that the consistency of its negation holds) then $X$ can be divided into $2^{\aleph_0}$ many equally large sets such that the intersection of any two of them has %\red{\textbf{[bounded / countable / some specific]}} 
small cardinality. %\green{\textbf{[Does this mean ``$2^{\aleph_0}$ many sets such that any two of them intersect?'' Reading the citations, it sounds like you get an upper bound on the size of their intersection. Also, I don't need to know the precise formulation of the very weak statement, but as written it sounds like all I need is an arbitrary sentence such that $\psi$ and $\neg \psi$ are both consistent.]}} 
In particular, this holds in \(\mathbf V^{\mathbb P}\) if \(\mathbb P\) adds 
$\geq \beth_\omega$ Cohen reals. See more in \cite{Sh:668}.

In particular in the class of metric spaces, we can't hope for positive results
if \(\kappa >\omega\) (but see \cite{Sh:668} for a positive result from
a supercompact cardinal; more history can be found there). The case \(\kappa=\omega\)
is not ruled out and, in fact, the main result of this paper, due to the first
author, is a positive arrow for this case.

\begin{definition} 
Let \(\mathcal M\) be the class of 
bounded metric spaces with ``\(X\) contains an isomorphic copy of \(Y\)'' being 
``\(X\) contains a subspace which is a scaled copy of \(Y\)''. (\(K\) is a scaled
copy of \(Y\) if there is a bijection \(f:K\to Y\) onto \(Y\) and a scaling factor
\(c\in\mathbb R^+\) such that \(d_K(x,y)=c\cdot d_Y(f(x),f(y))\). 
\end{definition}

\begin{theorem}
\label{thm.main} 
\[
2^\omega\rightarrow_{\mathcal M}(\omega)^1_\omega.
\]
\end{theorem}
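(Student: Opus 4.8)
\emph{Approach.} The plan is to build $Y$ as a completion of a ``tree of nested scaled copies of $K$'' inside a Banach space, and then extract a monochromatic copy from any $\aleph_0$‑coloring by a fusion/canonization argument. First I would normalize: rescale so $\operatorname{diam}(K)=1$, fix an enumeration $K=\{a_i:i<\omega\}$, and fix an isometric embedding $\iota$ of $K$ into a Banach space $E$ in which triangle inequalities come for free (e.g.\ $E=\ell^\infty$, or the Urysohn space). Choose $\langle\varepsilon_n:n<\omega\rangle$ positive with $\sum_n\varepsilon_n<\infty$ and each $\varepsilon_{n+1}$ negligible relative to $\varepsilon_n$. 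By recursion on $n$, place points $p_s\in E$ indexed by a tree $T$ --- conveniently $T=(\omega\times\omega)^{<\omega}$, so every node $s$ carries $\aleph_0$ disjoint clusters, the $r$‑th cluster above $s$ being $\{p_{s^\frown\langle(r,i)\rangle}:i<\omega\}$, placed so as to be an exact $\varepsilon_{|s|}$‑scaled copy of $K$ and far enough from the other clusters that $T$ really parametrizes nonoverlapping cones. Let $Y$ be the closure in $E$ of $\{p_s:s\in T\}$. Routine bookkeeping (rapid decrease of the $\varepsilon_n$) shows $Y$ is a bounded separable metric space; it has size $2^{\aleph_0}$ since $K$ has a pair at positive distance, which yields a binary subtree of nonoverlapping cones and hence continuum‑many distinct branch points.

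\emph{The supply of exact copies.} Because everything lives in $E$, the construction automatically produces many exact scaled copies of $K$: for a node $s$, an $r<\omega$ and a fixed ``shape'' $\gamma\in(\omega\times\omega)^\omega$, the set $C_{s,r,\gamma}:=\{p_{\,s^\frown\langle(r,i)\rangle^\frown\gamma}:i<\omega\}$ is a translate of the $r$‑th cluster above $s$, the tail contribution along $\gamma$ being the same for every $i$; so $C_{s,r,\gamma}$ is an exact $\varepsilon_{|s|}$‑scaled copy of $K$ whose points are genuine limit points of $Y$. One would like to conclude just from these: that every $c\colon Y\to\omega$ makes some $C_{s,r,\gamma}$ monochromatic. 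But this is \emph{false} as stated, and that is precisely the crux. An $E_0$‑parity coloring defeats the whole family: pick a transversal of the finite‑difference equivalence relation on branches and colour a branch by the number of coordinates in which it differs from its transversal representative; this colouring is nonconstant on every ``axis line'', hence on every $C_{s,r,\gamma}$. So the construction must be enriched so that $Y$ also contains exact scaled copies of $K$ whose branches differ in \emph{infinitely} many coordinates --- the perturbations arranged (by a more careful placement and choice of the $\varepsilon_n$) to cancel --- which a transversal‑based colouring cannot track.

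\emph{The combinatorial core.} With the enriched supply in hand, fix $c\colon Y\to\omega$ and run a fusion on $T$: level by level one maintains a finite committed subtree together with a residual reservoir of live clusters and live continuations; using that there are only $\aleph_0$ colours and that $K$ is countable, at each stage one either (a) already sees a monochromatic copy and stops, or (b) thins the live part --- deleting clusters and coordinates --- so that $c$ becomes progressively ``canonical'' on what survives (its value on a point depending on ever less of the branch). After $\omega$ steps, if (a) never fired, the surviving configuration is canonical enough that a monochromatic scaled copy of $K$ is read off outright, contradicting that $c$ had none; alternatively one arranges the recursion to converge and produces the monochromatic copy in the limit. The ultrametric warm‑up is the same scheme with the geometry trivialized, which is why it goes through with an $\aleph_1$‑sized tree.

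\emph{Main obstacle.} The hard part is step (b) married to the geometry: canonizing the colouring at one scale $\varepsilon_n$ must not annihilate the copies living at the other scales, and the ``spread‑out'' copies needed to beat $E_0$‑type colourings must be kept \emph{exact} while their defining branches are forced apart. Making the placement, the sequence $\langle\varepsilon_n\rangle$, and the bookkeeping of the fusion mesh so that all $\aleph_0$ colours and the countability of $K$ are exploited simultaneously (rather than one coordinate at a time) is where essentially all the work lies; everything before it --- the metric‑space construction, boundedness, the cardinality count --- is routine by comparison.
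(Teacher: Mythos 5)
Your proposal is not a proof: the step you yourself describe as ``where essentially all the work lies'' --- enriching the tree so that it carries exact scaled copies of $K$ that survive $E_0$-type colorings, and then running the fusion/canonization that extracts a monochromatic copy --- is exactly the content of the theorem, and it is left entirely unspecified. Worse, your own observation that a transversal/parity coloring is nonconstant on every $C_{s,r,\gamma}$ shows that the object you actually construct (the closure of $\{p_s:s\in T\}$) does \emph{not} work, and you give no indication of what the ``more careful placement'' is, why perturbations can be made to cancel while keeping the copies exact, or what canonization theorem for $\aleph_0$-colorings of the countably branching tree $(\omega\times\omega)^{<\omega}$ you intend to invoke. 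Note that with $\aleph_0$ colors on a countably branching tree even the single-level pigeonhole fails (one can color each cluster, which is a copy of the countable set $K$, injectively); the paper's ultrametric analogue uses an $\omega_1$-branching tree precisely to restore pigeonhole, and only for \emph{finite} $K$. There is also a structural warning sign: your $Y$ is by construction separable, whereas the paper's witness is an $\aleph_1$-saturated metric space, and no nontrivial separable space is $\aleph_1$-saturated (the type ``distance $\mathrm{diam}(Y)$ from every point of a countable dense set'' is consistent but unrealizable). Nothing in the proposal suggests why a separable witness should suffice.

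The paper's proof avoids all of this by never exhibiting copies concretely. It takes $X$ to be any $\aleph_1$-saturated metric space of size $2^\omega$ (built by a routine transfinite induction realizing types over countable sets) and argues by compactness: if $X=\bigcup_{n<\omega}X_n$ and no $X_n$ contains a scaled copy of $K$, one builds an increasing chain of finite sets $Y_n$ with distance functions $f_n$ such that the realization set $B(Y_n,f_n)$ avoids $X_0,\dots,X_{n-1}$. The inductive step works because a greedy attempt to build a $c$-scaled copy of $K$ inside $B(Y_n,f_n)\cap X_n$ must get stuck at some finite stage $k$, and the finite partial copy together with the unrealized distances $c\cdot d(z_i,z_k)$ is a consistent extension $f_{n+1}$ whose realization set misses $X_n$ as well. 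Saturation then realizes $\bigcup_n f_n$, producing a point lying in no $X_n$ --- a contradiction. To salvage your approach you would have to supply the canonization machinery in full and explain how it defeats the $E_0$ obstruction you found; the more realistic repair is to replace the concrete separable tree construction by a saturation argument of this kind.
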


In fact the theorem we prove is much stronger: for every countable metric space
any \(\aleph_1\)-saturated metric space \(X\) works.

The original motivation of the second author for considering these arrows comes
from a problem of M. Hru\v{s}\'{a}k stated in (\cite{Hrusak:2012}):

\begin{question*}
Does ZFC prove that there is a non  \(\sigma\)-monotone metric space
of size \(\aleph_1\)?
\end{question*}

If one could replace  \(2^\omega\) by \(\aleph_1\) in the above arrow, this
would give a positive answer. In fact, for a positive answer it would be sufficient
to consider the class \(\mathcal M\) with isomorphic copies being Lipschitz images,
which seems to be much weaker.

The paper is organized as follows. In the second section we prove the main result
and in the third section we discuss what can be proved for the restricted class
of ultrametric spaces. We finish the introduction by recalling some definitions 
and facts for the benefit of the reader.

\begin{definition}\label{def:metric}
1) A \emph{metric space} is a pair \((X,\rho)\) where 
\(\rho:X\times X\to\mathbb R\) is a \emph{metric} (on \(X\)), i.e. it satisfies, 
for all \(x,y,z\in X\),

\begin{enumerate}[(a)]
  \item  \(\rho(x,y)\geq 0\) and \(\rho(x,y)=0\iff x=y\);
  \item  \(\rho(x,y)=\rho(y,x)\); and
  \item  \(\rho(x,z)\leq\rho(x,y)+\rho(y,z)\).
\end{enumerate}

2) The third condition is called the  \emph{triangle inequality}. 
If it is strengthened to
\[
  \label{eq:strongtriangle} \forall x,y,z \in X,\  \rho(x,z) \leq \max\{\rho(x,y),\rho(y,z)\}
\]
\underline{then} we say that the space is \emph{ultrametric}.

% \textbf{\green{[The inequality holds even when some of $x,y,z$ coincide, or when 
% two of the given distances are equal. You don't need any extra conditions. From this, 
% you can \emph{prove} that every triangle is isosceles, which would be written as 
% $\rho(x,y) \neq \rho(y,z) \Rightarrow \rho(x,z)\ \red{=}\ \max\{\rho(x,y),\rho(y,z)\}$.]}}
% %\textbf{\green{[That's definitely not right. If I had equality there, then I could prove that all pairs of points are the same distance apart simply by permuting the variables.]}}

3) In the remainder of this paper, we may abuse notation slightly 
and refer to the metric space $(X,\rho)$ as $X$.
\end{definition}

% \begin{fact}
% An arbitrary function  \(f\) defined on a                   \label{fact:extension} 
% subset of \(X^2\)  with values in \([0,1]\) determines a pseudometric
% \(\rho_f\) on \(X\) (i.e. a function satisfying 2, 3 and the first part of 1) as
% follows:
% \[
% \rho_f(x,y) = \mbox{inf}\Big\{\sum_{i=0}^n f(x_i,x_{i+1}): x=x_0,x_{n+1}=y, 
%                   (x_i,x_{i+1})\in\mbox{dom}(f)\Big\}\cup\{1\}
% \]
% \end{fact}
% 
% 
% 
% Given a pseudometric  \(d\) on a space \(X\) we can introduce the following
% equivalence relation on \(X\): \(x\sim y\iff d(x,y)=0\). Then \(d\) 
% naturally determines a metric on the quotient \(X/\sim\). It is easy to see 
% that if \((X,d)\) is a pseudometric space such that any coloring of the space
% contains a monochromatic scaled copy of some \emph{metric} space \(K\), then the
% quotient metric space \((X/\sim,d)\) has the same property. This observation
% will enable us to prove theorem \ref{thm.main} by constructing a pseudometric 
% space without worrying whether it in fact is a metric space or not (it will be
% but we do not give a full proof).

\begin{definition}\label{def:saturated}
 A metric space \((X,\rho)\) is \(\aleph_1\)-saturated if
 for any at most countable \(Y\subseteq X\) and any function \(f:Y\to\mathbb R^+\) satisfying
 the triangle inequality
 \[
  \tag{$*$}\label{eq:triangle}
  f(x)+f(y)\geq \rho(x,y)\ \&\ f(x)+\rho(x,y)\geq f(y)
 \]
 for all \(x,y\in Y\) there is \(p\in X\) such that \(\rho(x,p) = f(x)\) for
 all \(x\in Y\).
 \end{definition}
 
 \begin{note}
 There is a standard way to see $X$ as a structure for a language with countably many 
 binary predicates \(\{R_q : q \in \mathbb Q\}\): namely, interpret the predicate 
 \(R_q(x,y)\) as \(\rho(x,y) \leq q\). Then the space $X$ is $\aleph_1$-saturated if 
 \begin{enumerate}
     \item it contains a copy of every finite metric space,
     
     \item given any finite metric spaces $Y_1 \subseteq Y_2$ with $|Y_2 \setminus Y_1| = 1$ and an isometry $\pi : Y_1 \to X$, the isometry can be extended to $Y_2$, and
     
     \item every bounded 1-type over a countable subset of $X$ is realized.
 \end{enumerate}
 \end{note}
 
 The following is standard and is included here for the convenience of the reader.
 
 \begin{observation} There is an \(\aleph_1\)-saturated metric space of size
 \(2^\omega\).
 \end{observation}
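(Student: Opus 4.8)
The plan is a standard closing-off argument: I build an increasing continuous chain $\langle X_\alpha : \alpha \le \omega_1 \rangle$ of metric spaces, each of cardinality $2^{\aleph_0}$, such that $X_{\alpha+1}$ realizes every admissible one-type over a countable subset of $X_\alpha$, and then take $X := X_{\omega_1}$. Call a function $f : Y \to \mathbb{R}^+$, where $Y$ is a subset of a metric space $(X,\rho)$, \emph{admissible} if it satisfies the inequalities of Definition~\ref{def:saturated}, equivalently $|f(y) - f(y')| \le \rho(y, y') \le f(y) + f(y')$ for all $y, y' \in Y$; and say a point $q$ \emph{realizes} $f$ if $\rho(q, y) = f(y)$ for every $y \in Y$. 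Everything reduces to the following one-point extension lemma: \emph{for any metric space $(X,\rho)$, any nonempty countable $Y \subseteq X$, and any admissible $f : Y \to \mathbb{R}^+$, there is a metric space $X'$ with $X \subseteq X' \subseteq X \cup \{p\}$ (for a fresh point $p$) containing a point that realizes $f$.}

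To prove the lemma I set $\bar f(x) := \inf\{\, f(y) + \rho(x, y) : y \in Y \,\}$ for $x \in X$ --- the maximal Kat\v{e}tov extension of $f$. Using admissibility of $f$, a short computation shows that $\bar f$ agrees with $f$ on $Y$ and that $|\bar f(x) - \bar f(x')| \le \rho(x, x')$ and $\rho(x, x') \le \bar f(x) + \bar f(x')$ for all $x, x' \in X$. Two cases: if $\bar f(x_0) = 0$ for some $x_0 \in X$, then $x_0$ itself realizes $f$ --- indeed $\rho(x_0, y) \le \rho(x_0, y') + f(y') + f(y)$ and $f(y) \le f(y') + \rho(y', x_0) + \rho(x_0, y)$ hold for every $y, y' \in Y$ by admissibility and the triangle inequality, and taking the infimum over $y'$ squeezes $\rho(x_0, y)$ to $f(y)$; so take $X' := X$. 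Otherwise $\bar f > 0$ everywhere on $X$, and then $X' := X \cup \{p\}$ with $\rho(p, x) := \bar f(x)$ is a metric space (the triangle inequalities involving $p$ are precisely the two Kat\v{e}tov inequalities for $\bar f$ verified above, and positivity of $\rho(p, \cdot)$ is the case assumption), with $p$ realizing $f$. Finally, observe that once a point realizes $f$ it continues to do so in every larger metric space, since distances are never changed.

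For the construction, let $X_0$ be any metric space of cardinality $2^{\aleph_0}$. Given $X_\alpha$ of size $2^{\aleph_0}$, let $T_\alpha$ be the set of all pairs $(Y, f)$ with $Y \subseteq X_\alpha$ nonempty countable and $f : Y \to \mathbb{R}^+$ admissible; then $|T_\alpha| \le (2^{\aleph_0})^{\aleph_0} \cdot (2^{\aleph_0})^{\aleph_0} = 2^{\aleph_0}$. Enumerate $T_\alpha = \{(Y_\xi, f_\xi) : \xi < \theta_\alpha\}$ with $\theta_\alpha \le 2^{\aleph_0}$ and build an auxiliary increasing continuous chain with $X_\alpha^0 = X_\alpha$, where $X_\alpha^{\xi+1}$ is obtained by applying the lemma to $X_\alpha^\xi$ and $(Y_\xi, f_\xi)$ (legitimate, since $Y_\xi \subseteq X_\alpha \subseteq X_\alpha^\xi$ and admissibility of $f_\xi$ depends only on distances within $Y_\xi$, which do not change), unions being taken at limit stages; set $X_{\alpha+1} := X_\alpha^{\theta_\alpha}$, and take unions at limit $\alpha$. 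Then $|X_{\alpha+1}| \le |X_\alpha| + \theta_\alpha = 2^{\aleph_0}$, and $X_{\alpha+1}$ realizes every admissible $f$ over every countable $Y \subseteq X_\alpha$. Put $X := X_{\omega_1} = \bigcup_{\alpha < \omega_1} X_\alpha$: it is a metric space of cardinality at most $\aleph_1 \cdot 2^{\aleph_0} = 2^{\aleph_0}$, hence exactly $2^{\aleph_0}$ as $X_0 \subseteq X$. It is $\aleph_1$-saturated: given countable $Y \subseteq X$ and admissible $f : Y \to \mathbb{R}^+$, the empty case is trivial, and otherwise, since $\mathrm{cf}(\omega_1) > \aleph_0$ and the chain is continuous, $Y \subseteq X_\alpha$ for some $\alpha < \omega_1$, so $(Y, f) \in T_\alpha$ and $f$ is realized in $X_{\alpha+1} \subseteq X$.

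The only real content is the one-point extension lemma, and the single delicate point in it is the degenerate case $\inf_{y \in Y} (f(y) + \rho(x, y)) = 0$: one cannot literally adjoin a new point at distance zero, but in exactly that situation the point $x$ already realizes $f$, so nothing must be added. The remaining ingredients --- the cardinal arithmetic $(2^{\aleph_0})^{\aleph_0} = 2^{\aleph_0}$, the verification that $\bar f$ obeys the Kat\v{e}tov inequalities, the fact that the union of a chain of metric spaces is a metric space, and the reflection $Y \subseteq X_\alpha$ for some $\alpha < \omega_1$ --- are routine.
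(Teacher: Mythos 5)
Your proof is correct and follows essentially the same route as the paper's: both realize types one at a time via the Kat\v{e}tov extension $\inf_{y}\{f(y)+\rho(x,y)\}$ and both handle the degenerate case by the observation that an infimum of zero means the type is already realized by an existing point. The only difference is bookkeeping --- the paper runs a single induction of length $2^\omega$ over a pre-enumerated list of pairs appearing cofinally often, whereas you use an $\omega_1$-chain of bulk extensions and reflect countable $Y$ into some stage via $\mathrm{cf}(\omega_1)>\aleph_0$ --- but the mathematical content is identical.
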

 \begin{proof} Let \(\{(Y_\alpha,f_\alpha):\alpha<2^\omega\}\) be an enumeration
 of all pairs such that \(Y_\alpha\in[2^\omega]^{\leq\omega}\) and \(f_\alpha:Y_\alpha\to\mathbb R^+\)
 with each pair appearing cofinally often. By induction define a sequence
 \(\langle d_\alpha:\alpha<2^\omega\rangle\) such that
 \begin{enumerate}
    \item \label{eq:extend} \(d_\alpha\subseteq d_\beta\) for all \(\alpha<\beta<2^\omega\);
    
    \item \label{eq:metric} \(d_\alpha\) is a metric on \(\alpha\); and
    
    \item \label{eq:realize} if \(Y_\alpha\subseteq\alpha\) and \((Y_\alpha,f_\alpha)\) satisfies \eqref{eq:triangle} of \ref{def:saturated} and there is no 
    \(\beta<\alpha\) such that \(d_{\alpha}(y,\beta)=f_\alpha (y)\) for
    all \(y\in Y_\alpha\) then \(d_{\alpha+1}(y,\alpha)=f_\alpha(y)\) for all 
    \(y\in Y_\alpha\).
 \end{enumerate}
 The only nontrivial part is guaranteeing \eqref{eq:realize} for successors. So
 assume \(Y_\alpha\subseteq\alpha\) and that \eqref{eq:triangle} is satisfied
 and for each \(\beta<\alpha\) there is \(y\in Y_\alpha\) such that \(d_{\alpha}(\beta,y)\neq f_\alpha(y)\).
 Extend \(d_\alpha\) to \(d_{\alpha+1}\) by defining
 \[
   d_{\alpha+1}(\beta,\alpha) = \inf\ \big\{d_\alpha(\beta,y) + f_\alpha(y) : 
   y\in Y_\alpha\big\},\quad d_{\alpha+1}(\alpha, \alpha) = 0.
 \]
 Then clearly both \eqref{eq:extend} and \eqref{eq:realize} are satisfied. To
 show that \eqref{eq:metric} is satisfied it is enough to show that \(d_{\alpha+1}(\beta,\alpha)>0\)
 for all \(\beta<\alpha\). Assume this is not the case for some \(\beta<\alpha\). 
 By assumption there is  \(y\in Y_\alpha\) such that 
 \(0<|f_\alpha(y)-d_\alpha(\beta,y)| = \varepsilon\). Since 
 \(d_{\alpha+1}(\beta,\alpha)=0\) we can find \(z\in Y_\alpha\) such that 
 \(d_\alpha(\beta,z)+f_\alpha(z) < \varepsilon/2\). There are two cases, 
 both leading to a contradiction: if \(f_\alpha(y) > d_\alpha(\beta,y)\) then \(d_\alpha(z,y)<d_\alpha(\beta,y)+\varepsilon/2\) 
 so \(d_\alpha(z,y) + f_\alpha(z) < d_\alpha(\beta,y) + \varepsilon = f_\alpha(y)\) 
 contradicting \eqref{eq:triangle}.
 On the other hand if \(f_\alpha(y) < d_\alpha(\beta,y)\) then 
 \(d_\alpha(z,y) \geq d_\alpha(\beta,y) - d_\alpha(\beta,z) =
 f_\alpha(y) + \varepsilon - d_\alpha(\beta,z) > f_\alpha(y) + \varepsilon/2 \geq f_\alpha(y) + f_\alpha(z)\) again contradicting \eqref{eq:triangle}.
 This completes the inductive definition. Finally we show that \((2^\omega,d_{2^\omega})\) is
 \(\aleph_1\)-saturated. Fix an at most countable \(Y\subseteq 2^\omega\) and an
 \(f:Y\to\mathbb R^+\). Find \(\alpha< 2^\omega\) such that \(Y\subseteq\alpha\) and
 \((Y,F)=(Y_\alpha,f_\alpha)\). But then the existence of \(p\) in \autoref{def:saturated}
 is guaranteed by \eqref{eq:realize} above.
 \end{proof}

 \section{The metric case}

 \begin{proposition} Assume \((K,d)\) is a countable bounded metric space 
 and \((X,\rho)=\bigcup_{n<\omega}X_n\) is a countable partition of an
 \(\aleph_1\)-saturated metric space. Then there is an \(n<\omega\) such that
 \(X_n\) contains a scaled copy of \((K,d)\).
 \end{proposition}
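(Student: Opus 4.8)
The plan is to build the scaled copy of $K$ inside one of the pieces $X_n$ by a careful recursion, using $\aleph_1$-saturation to realize partial ``distance profiles'' and an infinitary pigeonhole argument to pin down a single color. Enumerate $K = \{k_i : i < \omega\}$ and fix a ``scaling budget'': rather than committing to a single scaling factor $c$ in advance, I would work with distances of the form $c \cdot d(k_i,k_j)$ and let $c$ be determined (or constrained to lie in a small interval) in the course of the construction; since $K$ is bounded this is harmless. At stage $i$ I want a point $p_i \in X$ together with a color $n_i$ with $p_i \in X_{n_i}$, such that $\rho(p_i,p_j) = c\, d(k_i,k_j)$ for $j<i$, and — crucially — such that the construction can be continued.

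The key device is to use $\aleph_1$-saturation not merely to realize each next type over the finite set $\{p_0,\dots,p_{i-1}\}$, but to realize it inside a prescribed color class. This cannot be done for a single requested point, so instead I would realize, at stage $i$, an entire countable configuration: infinitely many candidate points $\{p_i^t : t < \omega\}$ all having the same distances $c\, d(k_i,k_j)$ to $p_0,\dots,p_{i-1}$ and pairwise distances chosen small (or chosen so that any future point can still see all of them consistently — here one checks the function $f$ satisfying \eqref{eq:triangle} of \autoref{def:saturated}, which is where the triangle-inequality bookkeeping lives). By pigeonhole, infinitely many of these $p_i^t$ lie in a single color class $X_{n_i}$. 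The subtlety is that the color $n_i$ may keep changing with $i$, so a straightforward recursion does not by itself produce a monochromatic copy.

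To defeat the moving color, I would organize the recursion as a tree argument. Build a finitely-branching (indeed $\omega$-branching at each node, but it suffices to keep, say, two children) tree of finite partial copies of initial segments of $K$, where along each branch the points are chosen as above and each node records the color of its point; the pigeonhole step guarantees that every node has infinitely many extensions, and a further pigeonhole (on the finitely — or countably — many colors) lets me thin the tree so that along some branch the recorded colors are eventually constant, equal to some fixed $n^\ast$. Alternatively, and more cleanly: first choose, by a preliminary application of saturation, a countable ``generic'' set $P = \{q_i : i<\omega\} \subseteq X$ that already carries a scaled copy of $K$; then $P = \bigcup_n (P \cap X_n)$, and I would argue that $K$ is ``self-similar enough'' — or rather, use that inside an $\aleph_1$-saturated space one can find scaled copies of $K$ realizing any consistent pattern of which coordinates have already been fixed — to run a back-and-forth that absorbs the recoloring. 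The honest statement of the step is: given a finite partial scaled copy $\{p_j : j \in s\}$ with $s \subseteq \omega$ finite and a target index $i \notin s$, and given that this partial copy sits in a color class we are trying to hit, $\aleph_1$-saturation provides infinitely many legal choices for $p_i$, one color class catches infinitely many, and a tree/fusion argument over the countably many colors isolates one class $X_{n^\ast}$ inside which a full branch can be completed.

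The main obstacle, as indicated, is exactly this: $\aleph_1$-saturation lets us realize types but is color-blind, so the entire difficulty is to convert ``infinitely many realizations of each successive type'' into ``one color class containing a full copy of $K$.'' I expect the heart of the argument to be the combinatorial lemma that lets us thin to a single color along some branch — essentially a Halpern–Läuchli-flavored or König-flavored pigeonhole on a tree of finite configurations — together with the verification that the distance functions produced at each node satisfy the triangle-inequality condition \eqref{eq:triangle} so that saturation actually applies. The boundedness of $K$ and the freedom in the scaling factor $c$ are what keep these consistency checks routine rather than obstructive.
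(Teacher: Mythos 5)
Your forward construction founders exactly at the step you yourself identify as the heart of the matter, and the combinatorial lemma you hope for there is false as stated. Realizing at each stage infinitely many candidates with the correct distances and pigeonholing them into a single colour class gives a tree of finite partial copies in which every node has infinitely many one-colour extensions; but with countably many colours no K\"onig-- or fusion--type argument produces a branch of constant colour. (Colour each node of $\omega^{<\omega}$ by its length: every node has infinitely many immediate extensions of a single colour, yet no branch is monochromatic and no thinning helps.) ``Eventually constant along a branch'' would in any case only yield a copy of a cofinite subset of $K$ under your enumeration, not of $K$ itself. Your alternative sketch --- first realize a generic countable copy $P$ of $K$ and then exploit self-similarity --- has the same problem: some $P\cap X_n$ is infinite, but nothing makes it a scaled copy of $K$. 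Since you supply no mechanism by which the metric structure repairs this purely combinatorial failure, the gap is genuine.

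The missing idea is to run the construction by contradiction, converting the \emph{failure} of each colour into a finite obstruction type. Suppose no $X_n$ contains a scaled copy. One builds increasing finite sets $Y_n\subseteq X$ with functions $f_n$ satisfying \eqref{eq:triangle} such that the set $B(Y_n,f_n)$ of points realizing $f_n$ is disjoint from $X_0,\dots,X_{n-1}$. At stage $n$ pick a fresh small scale $c<\min f_n[Y_n]$ and try to build a $c$-scaled copy of $K$ point by point inside $B(Y_n,f_n)\cap X_n$; by assumption this attempt gets stuck at some finite stage $k$, and the stuck configuration $z_0',\dots,z_{k-1}'$ together with the extended type $f_{n+1}(z_i')=c\cdot d(z_i,z_k)$ is precisely the obstruction: any realization of $f_{n+1}$ would have been a legal choice of $z_k'$, hence lies outside $X_n$ (and outside $X_i$ for $i<n$, since $B(Y_{n+1},f_{n+1})\subseteq B(Y_n,f_n)$). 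The union $(Y,f)=\bigcup_n (Y_n,f_n)$ is a countable consistent type, so $\aleph_1$-saturation realizes it by a point lying in no $X_n$, contradicting that the $X_n$ partition $X$. Note also that taking a fresh $c$ smaller than all previously assigned distances at each stage makes every new triangle isosceles with legs longer than the base, which is what renders the verification of \eqref{eq:triangle} routine; this replaces your floating scaling factor.
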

 \begin{proof} First fix an enumeration \(\{z_k:k<\omega\}\) of \(K\) and,
 aiming towards a contradiction, assume there is no scaled monochromatic copy
 of \(K\) in \(X\). We shall use the following notation: given an (at most) 
 countable \(Y\subseteq X\) and a function 
 \(f:Y\to\mathbb R^+\) as in \autoref{def:saturated}, let
 \[
  B(Y,f) = \big\{p\in X:(\forall  y\in Y)[d(p,y)=f(y)]\big\}.
 \]
 By our assumption \(B(Y,f)\neq\emptyset\). We shall inductively construct an 
 increasing sequence \(\{Y_n:n<\omega\}\) of finite subsets of \(X\) and 
 functions \(\{f_n:n<\omega\}\) such that 
 \begin{enumerate}
  \item \(f_n\subseteq f_{n+1}\); and
  \item \(f_n:Y_n\to\mathbb R^+\) satisfies \eqref{eq:triangle}; and
  \item \(B(Y_n,f_n)\cap X_i=\emptyset\) for each \(i<n\).
 \end{enumerate}
 Let \(Y_0=f_0=\emptyset\). Assume now that we have constructed \(Y_n,f_n\)
 and choose an arbitrary positive \(c < \min f_n[Y_n]\). %Let \(k\) be the largest natural number such that \(B(Y_n,f_n)\cap X_n\) contains a \(c\)-scaled copy of \(\{z_i:i<k\}\). 
 (We can choose $c$ because $Y_n$ is finite.) We try to choose $z_i' \in B(Y_n,f_n) \cap X_n$ by induction on $i < \omega$ such that $j < i \Rightarrow \rho(z_j',z_i') = d(z_j,z_i)$. 
 If we succeed then we are done. So without loss of generality there is some $k$ such that 
 $\langle z_i' : i < k\rangle$ is well defined but we cannot choose $z_k'$.
 Let \(K^\prime_n = \{z^\prime_i : i < k\}\) be this copy and let 
 \(Y_{n+1}=Y_n\cup K^\prime_n\). Finally extend \(f_n\) to \(Y_{n+1}\) by defining
 \[
   f_{n+1}(z^\prime_i) = c\cdot d(z_i,z_k).
 \]
 We need to check that \(f_{n+1}\) satisfies \eqref{eq:triangle}. Let $x,y \in \mathrm{dom}(f)$. 
 The condition is easily seen to be satisfied separately on \(Y_n\) (i.e. when $x,y \in Y_n$) by the inductive hypothesis and on \(K^\prime_n\) (i.e. when $x,y \in K_n$) because it is defined from a metric. 
 So without loss of generality let \(y\in Y_n\) and \(x \in K^\prime_n\), so $x = z_i'$ for some 
 $i < k$. Since \(K^\prime_n\subseteq B(Y_n,f_n)\), 
 by definition \(\rho(x,y) = \rho(z_i',y) = f(y)\). But then \eqref{eq:triangle} is clearly satisfied (the triangle 
 is isosceles and the two legs are longer than the base by the choice of $c$).
 
 Finally, we show that the inductive construction has to stop at some point 
 (thus there has to be a scaled copy of $K$ in some $X_n$). Let \(Y=\bigcup_{n<\omega}Y_n\) 
 and \(f=\bigcup_{n<\omega} f_n\). 
 Then \(B(Y,f)\) is nonempty (because $X$ is $\aleph_1$-saturated) and \(B(Y,f)\subseteq B(Y_n,f_n)\) for each \(n<\omega\) 
 (since \(Y_n\subseteq Y\) and $f_n = f\restriction Y_n$). But then \(B(Y,f)\cap X_n=\emptyset\) for each 
 \(n<\omega\)---a contradiction.
 
% \green{\textbf{[You'll have to decide whether you're calling it $R(f)$ or $B(Y,f)$. I can't adjudicate because the version of this paper that Dr. Shelah marked up is neither older nor newer than the version on Overleaf --- there are typos in one version that are fixed in the other, and vice versa.]}}
 \end{proof}

 \section{The Ultrametric Case}

As noted in the introduction, the second author's original motivation for studying these 
questions was the special case
 \[
  \aleph_1\rightarrow_{\mathcal M}(\aleph_0)^1_{\aleph_0}
\]
for the class of bounded metric spaces. Unfortunately, this arrow probably 
does not hold in ZFC. However a modified version of this arrow holds for 
the class of rational ultrametric spaces.

\begin{definition}\label{c0}
1) A metric space $X$ is called \emph{rational} if 
$x,y \in X \Rightarrow \rho(x,y) \in \mathbb{Q}$.

2) Repeating \ref{def:metric}, an \emph{ultrametric} space is a metric space 
that satisfies the strong triangle inequality 
$$\rho(x,z) \leq \max\{\rho(x,y),\rho(y,z)\}.$$ %\eqref{eq:strongtriangle}.

3) Given $(X,{\leq})$ a tree and $x,y \in X$, let $\Delta(x,y)$ be 
the $\leq$-maximal $z$ such that $z \leq x \wedge z \leq y$.

%\green{\textbf{They use $\Delta(x,y)$ down below. If you don't want a uppercase Delta, I can replace it with $\triangle(x,y)$}.}

4) A tree $T$ is $\theta$-branching iff the set of immediate successors of each element of $T$ is of size $\theta$.
\end{definition}

\begin{theorem}
There is a rational ultrametric space  \((M,d)\) of size \(\aleph_1\)   \label{thm.ultrametric} 
such that for every coloring of \(M\) by countably many colors \(M\) contains 
isometric monochromatic copies of every finite rational ultrametric space.
\end{theorem}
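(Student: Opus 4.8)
The plan is to build $M$ as the set of branches of a suitable tree, with the ultrametric given by levels: for branches $x \neq y$ put $d(x,y) = 1/(\mathrm{lev}(\Delta(x,y))+1)$ or something similar, so that the metric structure is completely coded by the tree order. Since we need a rational ultrametric space of size $\aleph_1$, I would take $T$ to be a tree of height $\omega$ with $\aleph_1$ many branches, but the ``right'' tree must be homogeneous enough that it copies every finite rational ultrametric space and, crucially, survives countable partitions. The natural candidate is an $\aleph_0$-branching tree of height $\omega$ in which we are allowed to use (rational) distances from some fixed countable dense set; more precisely, one should work with labelled trees where each edge carries a rational ``splitting value'' and the distance between two branches is read off from the value at their splitting node. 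Embedding a finite rational ultrametric space $F$ into $M$ amounts to finding finitely many branches through $T$ whose pairwise splitting pattern realizes the tree-with-labels associated to $F$; the tree is set up precisely so that every such finite labelled configuration appears (densely).

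The combinatorial heart is a Ramsey/tree argument for the partition. Given a coloring $c : M \to \omega$, I would argue that some color class contains, for every finite rational ultrametric $F$, an isometric copy. The key step is an inductive construction paralleling the proof of the Proposition in Section 2, but now carried out on the tree: one tries to build an isometric copy of $F$ inside a single color class $X_n$; if one cannot extend a partial copy to a full one, the obstruction produces a finite set of constraints (a finite labelled subtree together with prescribed distances) that must avoid $X_n$. Running this over all $n$ simultaneously along a fusion/diagonalization should yield a branch of $T$ that is constrained to lie outside every $X_n$, contradicting $M = \bigcup_n X_n$. The role played by $\aleph_1$-saturation in the metric case is here played by a homogeneity property of the tree: any finite set of ``level constraints'' consistent with the strong triangle inequality is realized, and in the ultrametric setting this consistency condition is simply that the constraints come from an actual finite labelled tree, which is much easier to maintain while keeping the space of size $\aleph_1$ (we only need $\aleph_1$ branches, not $2^{\aleph_0}$).

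Concretely I would (i) fix the countable set of rational values to be used and define the labelled tree $T$ of height $\omega$, $\aleph_0$-branching, in which every finite labelled tree embeds cofinally, and let $M$ be its branch space with the induced ultrametric, checking it is rational, ultrametric, and of size $\aleph_1$; (ii) verify the embedding lemma: every finite rational ultrametric space is isometric to a subspace of $M$, by translating $F$ to its canonical labelled tree and threading branches; (iii) prove the partition lemma by the inductive ``try to build a copy of $F$, else record an obstruction'' scheme, organized so that failure for every color and every $F$ yields a single branch disjoint from all color classes; (iv) assemble: the color class that works for cofinally many $F$'s (by a further pigeonhole on the countably many $F$'s, or by handling all finite $F$ with a single fusion) works for all of them. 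The main obstacle I anticipate is step (iii): making the obstructions for different colors $n$ compatible so that the diagonalization actually converges to a legitimate branch of $T$ — this is where the bound $\aleph_1$ on the size of $M$ is delicate, since we must have ``enough'' branches available at each stage without blowing the size up to the continuum, and it is presumably why the theorem only gives $\aleph_1$ rather than the optimal statement one might hope for.
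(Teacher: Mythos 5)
Your overall architecture (code finite rational ultrametric spaces as labelled trees, realize \(M\) via a labelled tree, do the Ramsey work on the tree) is the same as the paper's, but the combinatorial core --- your step (iii) --- has a genuine gap, and the specific mechanism you propose would fail. You want to imitate the Section 2 argument: for each color \(n\) record a finite obstruction, then fuse/diagonalize to a branch avoiding every color class. That argument runs entirely on \(\aleph_1\)-saturation: the union of the countably many finite constraint systems is a countable type, and saturation realizes it. An \(\aleph_1\)-sized branch space of an \(\aleph_0\)-branching tree of height \(\omega\) cannot have the analogous property: already over a countable rational ultrametric space \(Y\) (say the nodes of \({}^{<\omega}2\) with \(\rho(s,t)=2^{-|\Delta(s,t)|}\)) there are \(2^{\aleph_0}\) pairwise distinct consistent countable distance-types (one for each branch of \({}^{\omega}2\)), each needing its own realizing point, so any space realizing them all has size \(2^{\aleph_0}\). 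Hence your fusion has no reason to converge to an actual branch of an \(\aleph_1\)-sized tree, and you have correctly flagged exactly this as the unresolved obstacle.

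The paper avoids this by changing the pigeonhole rather than the saturation. Its \(M\) is \({}^{<\omega}\omega_1\) itself (nodes, not branches), i.e.\ an \(\omega_1\)-branching tree, labelled so that below every node each admissible rational value recurs \(\aleph_1\) many times. Since a finite rational ultrametric space sits on the leaves of a \emph{finite} labelled tree (this representation is itself a proposition requiring proof, which you assert rather than establish), it suffices to show: if an \(\omega_1\)-branching tree of finite height is colored with countably many colors, it has an \(\omega_1\)-branching subtree with monochromatic leaves. This is a finite induction on a rank predicate \(G(s,c,m)\) whose base case is just the regularity of \(\omega_1\): among \(\aleph_1\) immediate successors and \(\aleph_0\) colors, some color occurs \(\aleph_1\) often. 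Your \(\aleph_0\)-branching tree destroys exactly this step, since \(\aleph_0\not\rightarrow(\aleph_0)^1_{\aleph_0}\), and no homogeneity of the labelling restores it. The fix is to take the tree \(\omega_1\)-branching of height \(\omega\) (which also immediately gives \(|M|=\aleph_1\)), restrict to the subtree whose labels match the distance set of the target space, extract the monochromatic \(\omega_1\)-branching subtree, and embed the finite labelled tree there; note also that the theorem only requires one color per target space, so your final pigeonhole over all finite spaces is unnecessary.
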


This theorem is both a strengthening and a weakening of the above arrow. On the
one hand we get a universal space for all copies. The price we have to pay is to
restrict the copies to size  \(<\aleph_0\). The proof of the theorem is split
into two parts. We first prove that each finite ultrametric space can be
represented as a special kind of a tree. Then we use a standard  rank-type 
argument to show that whenever the tree \({}^{<\omega}\omega_1\) is colored by
countably many colors it contains monochromatic copies of all finite trees.

Before continuing with the proof of the first part we recall the following
basic observation about ultrametric spaces.

\begin{fact}
Let  \((X,\rho)\) be an ultrametric space. Then every triangle is      \label{fact.isosceles}  
isosceles. Moreover, the base is never longer than the sides. Formally:
\[
(\forall T\in[X]^3)
(\exists \{a,b\}%\red{
\subset T%}
,c\in T\setminus \{a,b\})
\big(\rho(a,b)\leq\rho(a,c)=\rho(b,c)\big)
\]
% \textbf{\green{[I suppose it's a matter of taste, but it might be more succinct, more enlightening, and more useful to phrase it as I did above:
% $\rho(a,b) \neq \rho(b,c) \Rightarrow \rho(a,c) = \max\{\rho(a,b),\rho(b,c)\}$.]}}
\end{fact}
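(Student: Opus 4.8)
The plan is to exploit the strong triangle inequality by choosing \(\{a,b\}\) to be (one of) the closest pair(s) among the three points of \(T\). Write \(T=\{x,y,z\}\) and, renaming if necessary, assume that \(\rho(x,y)=\min\{\rho(x,y),\rho(y,z),\rho(x,z)\}\); set \(a=x\), \(b=y\), \(c=z\). The displayed claim \(\rho(a,b)\leq\rho(a,c)=\rho(b,c)\) then splits into two halves. The inequality \(\rho(a,b)\leq\rho(a,c)\) (and likewise \(\rho(a,b)\leq\rho(b,c)\)) is immediate from the minimality in the choice of \(a,b\), so the only real content is the equality \(\rho(a,c)=\rho(b,c)\).

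For the equality I would apply the strong triangle inequality to the triple in the order \(a,b,c\): \(\rho(a,c)\leq\max\{\rho(a,b),\rho(b,c)\}\). Since \(\rho(a,b)\leq\rho(b,c)\) by the choice of \(a,b\), the maximum on the right equals \(\rho(b,c)\), giving \(\rho(a,c)\leq\rho(b,c)\). Symmetrically, the strong triangle inequality in the order \(b,a,c\) gives \(\rho(b,c)\leq\max\{\rho(b,a),\rho(a,c)\}=\rho(a,c)\), again because \(\rho(a,b)\leq\rho(a,c)\). Combining the two inequalities yields \(\rho(a,c)=\rho(b,c)\), which is exactly what the ``moreover'' part (base no longer than the sides, both sides equal) asserts once we recall that \(\rho(a,b)\) was chosen minimal.

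There is essentially no obstacle here; the argument is a two-line application of the defining inequality of \autoref{def:metric}(2). The only point worth a word is the tie-breaking in the selection of the minimal pair: if two or all three of the distances coincide, any pair realizing the minimum works, so the choice is harmless. (We implicitly use that a \(3\)-element set \(T\in[X]^3\) consists of distinct points, hence has three positive pairwise distances, though positivity is not actually needed for the displayed conclusion.)
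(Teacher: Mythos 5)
Your proof is correct. The paper itself gives no proof of this Fact (it is merely ``recalled'' as a basic standard observation), and your argument --- take \(\{a,b\}\) to be a pair realizing the minimal distance and apply the strong triangle inequality in both directions to get \(\rho(a,c)=\rho(b,c)\) --- is exactly the standard argument one would supply, with the tie-breaking point correctly noted as harmless.
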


\begin{definition*}
A metric space  \((X,\rho)\) is a \emph{rational tree space} if there is 
an ordering \(\leq\) which makes \(X\) a tree and a nonincreasing function 
\(h:X\to\mathbb Q\) such that, for distinct \(x\neq y\in X\),
\[
  \rho(x,y) = \mbox{inf}\ \Big\{h(z):z\leq x\ \&\ z\leq y\Big\}.
\]
We will also call the triple \((X,\leq,h)\) a rational tree space. 
The metric space \((X,\rho)\) is a \emph{rational branch space} if it is a subspace
of a rational tree space $(T,\rho)$ with all nodes of \(X\) being branches (leaf nodes) of 
\((T,\rho)\). It is a \emph{regular rational branch space} if, moreover, each node 
of \(X\) has the same height and the function \(h_T\) is constant on the levels
of \(T\).
\end{definition*}

\begin{proposition*}
Each finite rational ultrametric space is a regular rational branch
space.
\end{proposition*}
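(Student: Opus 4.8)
The plan is to realize $(X,\rho)$ on the canonical hierarchy of ``ball partitions'' of $X$. First I would record the standard consequence of the strong triangle inequality (equivalently, of \autoref{fact.isosceles}): for every real $r\ge 0$ the relation $x\sim_r y\iff\rho(x,y)\le r$ is an equivalence relation on $X$, and if $r\le r'$ then the partition $\mathcal P_r$ into $\sim_r$-classes (the closed $r$-balls) refines $\mathcal P_{r'}$. Since $X$ is finite, only finitely many of these partitions occur: letting $r_1>r_2>\dots>r_m$ list the positive distances realized in $X$ (finitely many, all rational), the distinct partitions form a chain under refinement, from the finest $Q_0=\mathcal P_0$ (the partition of $X$ into singletons) to the coarsest $Q_m=\mathcal P_{r_1}=\{X\}$, with $Q_j=\mathcal P_{r_{m-j+1}}$ for $1\le j\le m$.

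Next I would take the underlying set of the tree to be $T=\{(j,B):0\le j\le m,\ B\in Q_j\}$, ordered by $(j_1,B_1)\le(j_2,B_2)$ iff $j_1\ge j_2$ and $B_1\supseteq B_2$. Because the partitions are nested, for each node the set of its $\le$-predecessors is linearly ordered, so $(T,\le)$ is a finite tree; its root is $(m,X)$ and a node $(j,B)$ sits at height $m-j$. Every node with $j>0$ has a successor (take $x\in B$ and the block of $Q_{j-1}$ containing it), so the leaves are exactly the singletons $(0,\{x\})$ and they all lie at the common height $m$; the key point making this work is that a block which is already a singleton simply persists through all the finer levels, which is precisely the ``padding'' that equalizes the leaf heights, so no ad hoc adjustment is needed. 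I would then put $h:T\to\mathbb Q$, $h(j,B)=r_{m-j+1}$ for $1\le j\le m$ and $h(0,B)=0$; this is rational, constant on each level of $T$, and strictly decreasing along every branch (as $r_1>\dots>r_m>0$), hence nonincreasing. So $(T,\le,h)$ is a rational tree space with $h_T$ constant on levels.

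To finish, identify each $x\in X$ with the leaf $(0,\{x\})$, so that $X$ is the set of all leaves of $T$, and check that the tree metric restricts to $\rho$ on $X$. For distinct $x,y$, the meet $\Delta\big((0,\{x\}),(0,\{y\})\big)$ is the $\le$-deepest block (smallest $j$) containing both $x$ and $y$; since $x\sim_r y$ iff $\rho(x,y)\le r$ and $\rho(x,y)=r_k$ for some $k\le m$, this block lies on the level $j=m-k+1$, where $h$ has value $r_k=\rho(x,y)$. As $T$ is finite the infimum defining the tree metric is attained, at $\Delta$ (since $h$ decreases downward), so it equals $h(\Delta)=\rho(x,y)$. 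Hence $(X,\rho)$ is a regular rational branch space, witnessed by $(T,\le,h)$.

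I do not expect a genuinely hard step. The only things needing care are keeping the direction of $\le$ compatible with ``$h$ nonincreasing'' (so that a deeper meet carries a smaller $h$-value, matching a smaller distance), and noticing that using the \emph{canonical} ball hierarchy — rather than building some tree by hand — is what simultaneously delivers all three regularity requirements: equal leaf heights, a level-constant $h$, and recovery of the original metric, with the required padding appearing automatically.
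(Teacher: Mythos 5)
Your proof is correct, but it takes a genuinely different route from the paper's. The paper works directly on the points of \(X\): it defines a domination preorder \(x\leq_0 y\iff(\forall z\neq x)\,\rho(x,z)\geq\rho(y,z)\), uses Fact~\ref{fact.isosceles} to show this is a tree-like preorder, refines it to a tree order on \(X\), sets \(h(s)=\max\{\rho(s,t):t\geq s\}\), and then has to make two repairs: the resulting tree metric only \emph{dominates} \(\rho\), so new meet-points must be adjoined one pair at a time to pull it down to \(\rho\), and the regularity requirements (equal leaf heights, level-constant \(h\)) are only asserted at the very end as an easy further enlargement. You instead use the canonical ball hierarchy: the nested partitions \(\mathcal P_r\) into closed \(r\)-balls, which are partitions precisely because the strong triangle inequality makes \(\rho(x,y)\leq r\) an equivalence relation. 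This realizes every point of \(X\) as a leaf from the outset, makes \(h\) level-constant by construction, equalizes leaf heights automatically (singleton blocks persist through the finer levels), and recovers \(\rho\) exactly at the meets, so no patching is needed; your computation that the meet of \((0,\{x\})\) and \((0,\{y\})\) sits on the level carrying the value \(\rho(x,y)\) is the whole verification. Your construction also visibly has the same distance set as \(X\) (the \(h\)-values on the non-leaf levels are exactly the realized distances), which is the feature of the witnessing tree that is actually used later in the proof of Theorem~\ref{thm.ultrametric}. The paper's approach keeps the original points as (internal) nodes of the tree before repair, which is marginally more economical, but for the statement at hand your argument is the cleaner and more self-contained one.
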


\begin{proof}%[Proof of Claim]
Let  \((X,\rho)\) be a finite rational ultrametric space. Define a relation
\(\leq_0\) on \(X\) as follows:
\[
  x\leq_0 y \iff (\forall z\neq x)(\rho(x,z)\geq\rho(y,z))
\]

\begin{claim*}
The relation  \(\leq_0\) is transitive.
\end{claim*}

\begin{proof}[Proof of Claim]
Let  \(a\leq_0 b\ \&\ b\leq_0 c\). We need to show that 
\(a\leq_0 c\). We may assume \(a,b,c\) are distinct, otherwise there is nothing
to prove. So consider some \(z\neq a\). We just need to show that 
\(\rho(a,z)\geq\rho(c,z)\). If $z=c$ then $\rho(c,z) = 0 \leq \rho(a,z)$. If \(z=b\), then the inequality follows directly 
from \(b\leq_0 c\). So assume \(z\neq b\). Then 
\(\rho(a,z)\geq\rho(b,z)\geq\rho(c,z)\), and so $\rho(a,z) \geq \rho(c,z)$ as promised. 
The first inequality follows from \(a\leq_0 b\) and the second from \(b\leq_0 c\). 
This finishes the proof of
the claim.\renewcommand{\qedsymbol}{$\blacksquare$}
\end{proof}

\begin{claim*}
For each  \(y\in X\) the set \(\{a:a\leq_0 y\}\) is linearly 
(quasi)-ordered by \(\leq_0\).
\end{claim*}

\begin{proof}[Proof of Claim]
Assume  \(a_0,a_1\leq_0 y\) and, aiming towards a contradiction,
assume that \(a_0\not\leq_0 a_1\) and \(a_1\not\leq_0 a_0\). So there must be 
\(z_0,z_1\) such that \(\epsilon_i = \rho(a_i,z_i) < \rho(z_i,a_{1-i})\) 
for \(i=0,1\). Let $\delta = \rho(a_0,a_1)$. Applying Fact \ref{fact.isosceles} we get 
$\delta = \rho(a_1,a_0) = \rho(a_1,z_0)$ (reading the above inequality for $i=0$) and 
$\delta = \rho(a_0,a_1) = \rho(a_0,z_1)$ (for $i=1$). Now consider the triangle $a_0,z_0,z_1$. We have $\rho(a_0,z_0) < \delta = \rho(a_0,z_1)$ hence by \ref{fact.isosceles} we have $\rho(z_0,z_1) = \delta$.
%\(\rho(a_0,z_1)=\rho(a_1,z_0)=\rho(z_0,z_1)=\delta>\epsilon_i\).

Since \(a_i\leq_0 y\), we have \(\delta>\rho(a_i,z_i)\geq\rho(y,z_i)\) for $i = 0,1$. But, again
by \ref{fact.isosceles}, the triangle $z_0,z_1,y$ is impossible. This is a contradiction.\renewcommand{\qedsymbol}{$\blacksquare$}
\end{proof}

Consider now the equivalence relation  \(a\simeq b\iff a\geq b\ \&\ b\geq a\) and
refine the \(\leq_0\) order on each equivalence class to an arbitrary linear
order. Call the resulting order \(\leq\). Since \(X\) is finite, it is clear
that \((X,\leq)\) is a tree. For \(s\in X\) put
\[
h(s)=\max\{\rho(s,t):t\geq s\}\ \big(=\max\{\rho(s,t):t\geq_0 s\}\big)
\]
(The second equality follows from the fact that if \(a\simeq b\) and
\(s\neq a,s\neq b\) then \(\rho(s,a)=\rho(s,b)\).) Let \(d\) be the metric 
of the tree space \((X,\leq,h)\).

\begin{claim*}
\(d(x,y)\geq \rho(x,y)\) and, if \(x\leq_0 y\), then \(d(x,y)=\rho(x,y)\) (recalling $d$ is the metric from \ref{thm.ultrametric}).
\end{claim*}

\begin{proof}[Proof of Claim]
Assume first that  \(x\leq_0 y\). Then 
\(d(x,y)=h(x)\geq \rho(x,y)\) by definition. Moreover if \(z\geq_0 x\) then
\(\rho(z,y)\leq\rho(x,y),\rho(x,z)\) (since \(x\leq_0 z\) and \(x\leq_0 y\)) 
and, since \(X\) is ultrametric, it follows that \(\rho(x,y)=\rho(x,z)\). 
In particular, since the choice of \(z\) was arbitrary, \(h(x)=\rho(x,y)\),
proving the second part of the claim. To finish the proof assume now that 
\(x,y\) are incomparable in \(\leq_0\) and let \(s=\Delta(x,y)\).
Then \(h(s)\geq\rho(s,y)\geq\rho(x,y)\) since \(s\leq_0 x\).\renewcommand{\qedsymbol}{$\blacksquare$}
\end{proof}

Unfortunately, the inequality in the above claim can be strict (e.g. if 
we consider the subspace of a tree space which results from deleting a level
the resulting subspace cannot be a tree space). We need to add a
point to the tree for each pair  \(x,y\) with \(\rho(x,y)<d(x,y)\). 
We will use the following claim

\begin{claim*}
Suppose  \((Y,\leq,h)\) is a tree space extending \((X,\leq,h)\) such that 
\(d_Y(x,y)\geq\rho(x,y)\) for each \(x,y\in X\). Suppose that there are
\(a,b\in X\), incompatible in \(\leq\) with \(\rho(a,b)< d_Y(a,b)\). Then there
is a tree space \(Y^\prime\) extending \(Y\) such that
\(d_{Y^\prime}(x,y)\geq\rho(x,y)\) for each \(x,y\in X\) and 
\(\rho(a,b)=d_{Y^\prime}(a,b)\).
\end{claim*}

\begin{proof}[Proof of Claim]
Let  \(Y^\prime= Y\cup\{p\}\) and extend the order so that 
\(\Delta(a,b)\leq p\leq a,b\). Moreover let \(h(p)=\rho(a,b)\). Notice that
if \(x,y\in X\) and either \(x\not\geq a\ \&\ x\not\geq b\) or 
\(y\not\geq a\ \&\ y\not\geq b\) or \(x\leq y\) or \(y\leq x\) then 
\(d_{Y^\prime}(x,y)=d_{Y}(x,y)\) and there is nothing to prove. So, without loss 
of generality, assume \(x\geq a\) and \(y\geq b\). But then 
\(\rho(a,b)\geq\rho(x,b)\) (since \(a\leq x\)) and 
\(\rho(b,x)\geq\rho(x,y)\) (since \(b\leq y\)). Since 
\(\Delta(x,y)=\Delta(a,b)=p\) we have \(d_{Y^\prime}(x,y)=h(p)=\rho(a,b)\) and
this finishes the proof of the claim.\renewcommand{\qedsymbol}{$\blacksquare$}
\end{proof}

Using the above claim to iteratively add points we finally arrive at a 
tree space  \((Y,\leq,h)\) such that \(d_Y\upharpoonright X=\rho\) which,
moreover, has the same distance set as the original \(X\). It is not
hard to further enlarge \(Y\) to make it a regular rational branch space.
\end{proof}

\begin{proposition*}
Assume  \(T\) is an \(\omega_1\)-branching tree\footnote{see \ref{c0}(4)} of height 
\(n<\omega\) and \(\chi:T\to\omega\) is a coloring of the tree by countably
many colors. Then there is an \(\omega_1\)-branching subtree\footnote{meaning the subtree is downward, and all its maximal nodes are maximal nodes of $T$.} of \(T\) whose 
branches (i.e. leaf nodes) have the same color.
\end{proposition*}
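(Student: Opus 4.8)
The plan is to induct on the height $n$ of $T$. For $n=1$ the tree is a root together with $\omega_1$ leaves (its immediate successors); since the leaves are colored by $\omega$ colors, by the pigeonhole principle one color class $S$ among the leaves has size $\aleph_1$, and $\{\mathrm{root}\}\cup S$ is the desired subtree (its unique level of leaves is monochromatic). For the inductive step, suppose the statement holds for all $\omega_1$-branching trees of height $<n$, and let $T$ have height $n$ with coloring $\chi$. For each immediate successor $t$ of the root, the subtree $T_t$ of nodes $\geq t$ is an $\omega_1$-branching tree of height $n-1$, so by the induction hypothesis there is an $\omega_1$-branching downward subtree $S_t\subseteq T_t$ all of whose leaves get a single color, say $c(t)\in\omega$. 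This defines a coloring $t\mapsto c(t)$ of the $\omega_1$ immediate successors of the root by countably many colors; again by pigeonhole, there are $\aleph_1$ many such $t$ with a common value $c(t)=c^\ast$. Take $S$ to be the root together with $\bigcup\{S_t : c(t)=c^\ast\}$.

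I would then check that $S$ is an $\omega_1$-branching subtree of $T$ whose leaves all have color $c^\ast$. It is downward closed: any node of some $S_t$ lies above $t$, and $S_t$ was downward closed in $T_t$, so the only node of $S$ below it besides those in $S_t$ is the root, which we included. Its maximal nodes are exactly the maximal nodes of the $S_t$'s, which are maximal in $T_t$, hence maximal in $T$; and each carries color $c(t)=c^\ast$, so the leaves of $S$ are monochromatic. For the branching condition: the root of $S$ has $\aleph_1$ immediate successors in $S$ (the chosen $t$'s), and every non-root, non-leaf node $u$ of $S$ lies in a unique $S_t$ and has $\aleph_1$ immediate successors there, which are also its immediate successors in $S$ since $S\cap T_t=S_t$. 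Thus $S$ is $\omega_1$-branching of height $n$, as required.

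There is essentially no hard step here; the argument is the routine rank-type induction promised before the statement. The only place requiring a modicum of care is the bookkeeping in the inductive step — making sure that "$\omega_1$-branching subtree whose leaves are maximal in $T$" is exactly the property preserved by the construction, and that restricting to the $\aleph_1$-sized set of good immediate successors of the root does not break the branching condition at the root. Since $\aleph_1$ is regular (indeed, merely uncountable suffices: a countable-to-one coloring of an uncountable set has an uncountable fiber), the pigeonhole step is immediate, and the induction goes through cleanly.
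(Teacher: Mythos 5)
Your proof is correct. It differs from the paper's in organization, though the combinatorial engine is the same single fact (a map from an $\aleph_1$-sized set to $\omega$ has an uncountable fiber). The paper defines a rank predicate $G(s,c,m)$ by recursion on $m$ --- roughly, ``above $s$ there is an $\omega_1$-branching subtree of height $m+1$ all of whose leaves get color $c$'' --- and argues by contradiction: assuming $\neg G(\emptyset,c,\mathrm{ht}(T)-1)$ for every $c$, it descends along a single bad branch $\langle\alpha_i\rangle$ to a node one level above the leaves at which no color is good, contradicting pigeonhole there. You instead run a direct induction on the height, applying the inductive hypothesis to each subtree above an immediate successor of the root, pigeonholing on the resulting colors, and gluing the surviving subtrees. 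Your bottom-up version has the small advantage of actually exhibiting the monochromatic $\omega_1$-branching subtree and verifying its properties (downward closure, maximal nodes maximal in $T$, branching at every non-leaf node), whereas the paper only establishes $G(\emptyset,c,\mathrm{ht}(T)-1)$ and declares the extraction of the subtree ``clearly enough''; the paper's version, conversely, isolates the rank predicate, which is the form in which the statement is reused. Both are complete proofs of the proposition.
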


\begin{proof}
Given a color  \(c<\omega\) and \(s\in T\) define
\[
  G(s,c,0) \iff \big|\{\alpha:\chi(s^{\smallfrown}\alpha)=c\}\big|=\omega_1
\]
and, inductively, 
\[
  G(s,c,m+1) \iff \big|\{\alpha:G(s^{\smallfrown}\alpha,c,m)\}\big|=\omega_1.
\]
To prove the proposition it is clearly enough to show that there is some
\(c<\omega\) such that \(G(\emptyset,c,\mathrm{ht}(T)-1)\). Suppose otherwise. Then
we can build by induction $\alpha_m$ for $m < \mathrm{ht}(T) - 1$ such 
that for \(m<\mathrm{ht}(T)\) we have 
\[ 
  (\forall c<\omega)\neg G\big(\langle \alpha_i : i < m\rangle,c,\mathrm{ht}(T)-m\big).
\]
For $m=0$ this is our assumption, working towards contradiction. For $m+1$ this is again easy. Hence 
%In particular
\[
  (\forall c<\omega)\neg G\big(\big\langle \alpha_i:i<\mathrm{ht}(T) - 1\big\rangle,c,0\big)
\]
which is impossible since if we let \(s = \langle \alpha_i : i < \mathrm{ht}(T)-1\rangle\)
then, since \(T\) is \(\omega_1\)-branching, \(s\) must have uncountably
many successors of the same color.
\end{proof}

\begin{proof}[Proof of Theorem  \ref{thm.ultrametric}]
Let  \(M={}^{<\omega}\omega_1\) and define \(h_M:M\to\mathbb Q\) such that
for each \(\sigma\in M\) and each \(q\in [0,h_M(\sigma))\) the set 
\(\{\alpha:h_M(\sigma^{\smallfrown}\alpha)=q\}\) has size \(\aleph_1\). Let \(d_M\) be
the corresponding metric making \(M\) a tree space. Let \(X\) be a finite rational 
metric space, \(h\) a decreasing enumeration of its distance set and let 
\((Y,\leq,h_Y)\) be a tree space witnessing that \(X\) is a regular rational branch 
space. Let \(\chi:T\to\omega\) be an arbitrary coloring of \(M\). Consider the subtree
\(M^\prime=\{s:h\upharpoonright s = d\upharpoonright|s|\}\). Then \(M^\prime\) is 
\(\omega_1\)-branching. By the previous proposition there is a color \(c\) and an 
\(\omega_1\) branching subtree \(M^{\prime\prime}\) of \(M^\prime\) with all 
branches of color \(c\). We can now build an order-isomorphism of \(Y\) into 
\(M^{\prime\prime}\) which, by choice of \(M^\prime\), preserves \(h\). It follows 
that \(M^{\prime\prime}\) contains a monochromatic isometric copy of \(X\).
\end{proof}

\section{acknowledgements}
Several people provided valuable input for the paper. The second author would
like to thank Peter Komj\'{a}th for encouraging discussions and to the members of the
Prague Set Theory seminar, in particular David Chodounsk\'{y} and Jan `Honza' Greb\'{i}k,
who patiently listened to my presentations and helpfully pointed out errors. The
ideas behind the proof of Theorem \ref{thm.main} are entirely due to the first author:
the second author has merely deciphered them and filled in the technical details.

\bibliographystyle{amsalpha}
\bibliography{shlhetal,ramsey-partitions}
\end{document}